\newtheorem{theorem}{Theorem}[section]
\newtheorem*{theorem*}{Theorem}
\newtheorem{proposition}[theorem]{Proposition}
\newtheorem{lemma}[theorem]{Lemma}
\newtheorem{problem}[theorem]{Problem}
\newtheorem{conjecture}[theorem]{Conjecture}
\newtheorem{example}[theorem]{Example}
\theoremstyle{remark}
\newtheorem{remark}{Remark}
\numberwithin{equation}{section}
\date{}
\newcommand{\dist}{\mathrm{dist}}
\renewcommand{\div}{{\rm{\,div\,}}}
\newcommand{\loc}{{\rm{\,loc\,}}}
\newcommand{\supp}{{\rm{\,supp\,}}}
\renewcommand{\thefootnote}{\fnsymbol{footnote}}
\title{$L^p$ estimate for positive harmonic functions near singularities and B\^{o}cher type theorems}
\author{Shuimu Li$^{1}$}
\begin{document}

\footnotetext{1. School of Mathematical Sciences, CMA-Shanghai, Shanghai Jiao Tong University, Shanghai { \rm 200241}, China;}

\footnotetext{The first author Shuimu Li (lsm000524@sjtu.edu.cn) is partially supported by NSFC-12031012 and NSFC-11831003. }
\renewcommand{\thefootnote}{\arabic{footnote}}

\maketitle
\begin{abstract}
\noindent In this paper, positive solutions to the Laplace equation with 1-dimensional circular singularities are investigated. First, we establish $L^p$ integrability estimates for such solutions $u$ near the singularities, in comparison with classical $L^1$ estimates. Then we characterize $-\Delta u$ around the circle. The method we developed may also be adapted to higher dimensional as well as more generalized cases.  

\noindent{\bf{Keywords}}: singular solutions, B\^{o}cher's theorem.

\noindent{\bf {MSC}}:
                        35A21, 35B09, 35B65, 35J05.  
\end{abstract}

\tableofcontents
\section{Introduction}
The analysis of singular solutions to partial differential equations has caused widespread interest in both analysis and geometry. Removable singularities were first investigated by Riemann in the context of holomorphic functions on the punctured ball in $\mathbb{C}$. In PDE theory, the proposition is formulated as follows:
\begin{proposition}
If $-\Delta u=0$ on the punctured unit ball $B_1(0)\setminus\{0\}$ in $\mathbb{R}^n$ and $u(x)=o(|x|^{2-n})$ as $x\rightarrow0$, then $u$ extends to a harmonic function on $B_1(0)$.
\end{proposition}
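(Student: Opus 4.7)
The plan is to compare $u$ with a harmonic extension of its boundary values and to show their difference vanishes using the fundamental solution as a barrier. First I would solve the Dirichlet problem on $B_1(0)$ with boundary data $u|_{\partial B_1(0)}$, producing a harmonic function $h\in C^2(B_1(0))\cap C(\overline{B_1(0)})$. Setting $w=u-h$, the function $w$ is harmonic on the punctured ball $B_1(0)\setminus\{0\}$, vanishes on $\partial B_1(0)$, and still satisfies $w(x)=o(|x|^{2-n})$ as $x\to 0$. The goal reduces to proving $w\equiv 0$, which will give the desired harmonic extension of $u$.

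Next, I would exploit the radial harmonic function $\Phi(x)=|x|^{2-n}$ (for $n\ge 3$; in the $n=2$ case one uses $\Phi(x)=-\log|x|$ and the hypothesis $u=o(|x|^{2-n})=o(1)$ in fact gives a stronger estimate, so the argument goes through). For each small $\delta>0$, define
\[
\eta(\delta):=\delta^{n-2}\max_{|x|=\delta}|w(x)|,
\]
which tends to $0$ as $\delta\to 0$ by the growth hypothesis on $u$ (and the boundedness of $h$ near $0$). Then the function
\[
v_\delta(x):=\eta(\delta)\bigl(|x|^{2-n}-1\bigr)
\]
is harmonic on the annulus $A_\delta:=B_1(0)\setminus\overline{B_\delta(0)}$, vanishes on $\partial B_1(0)$, and on $\partial B_\delta(0)$ satisfies $v_\delta\ge |w|$ by construction.

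The third step is to apply the maximum principle to $w-v_\delta$ and $-w-v_\delta$ on $A_\delta$: both are harmonic and have nonpositive boundary values on $\partial A_\delta$, hence $|w(x)|\le v_\delta(x)$ throughout $A_\delta$. Fixing an arbitrary $x\in B_1(0)\setminus\{0\}$ and letting $\delta\to 0$, the right-hand side tends to $0$ because $\eta(\delta)\to 0$ while $|x|^{2-n}-1$ stays fixed. Therefore $w(x)=0$, so $u\equiv h$ on $B_1(0)\setminus\{0\}$ and $u$ extends to a harmonic function on $B_1(0)$.

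The main obstacle is the calibration of the barrier: one needs the sublinearity of $w$ against $\Phi$ to beat the blow-up of $\Phi$ at the origin, which is exactly what the hypothesis $u(x)=o(|x|^{2-n})$ provides. Once the barrier $v_\delta$ is set up correctly, the rest is a routine application of the maximum principle on annular domains, and the argument naturally highlights the sharpness of the growth condition: any solution growing precisely like $|x|^{2-n}$ (such as the fundamental solution itself) is a genuine obstruction to removability.
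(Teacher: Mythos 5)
The paper does not prove this proposition; it is quoted as Riemann's classical removable-singularity theorem, so there is no in-text proof to compare against. Your strategy---subtract a harmonic extension $h$ of the boundary data, put $w=u-h$, and squeeze $w$ between scaled copies of the fundamental solution via the maximum principle on annuli---is precisely the standard classical argument, so in spirit the route is correct.

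Two slips in the execution deserve attention. First, $u$ is only defined on the open set $B_1(0)\setminus\{0\}$, so $u|_{\partial B_1(0)}$ is not a priori a well-defined continuous function; the standard remedy is to carry out the argument on $B_r(0)$ for some $0<r<1$ (where $u$ is smooth in a neighbourhood of $\partial B_r(0)$) and then observe that $r$ was arbitrary. Second, and more substantively, the claimed inequality $v_\delta\ge|w|$ on $\partial B_\delta(0)$ is false for the barrier you wrote down: there $v_\delta=\eta(\delta)\bigl(\delta^{2-n}-1\bigr)$ whereas $\max_{|x|=\delta}|w|=\eta(\delta)\delta^{2-n}$, so in fact $v_\delta<\max_{|x|=\delta}|w|$, and the phrase ``by construction'' is not justified. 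The remedy is trivial: either discard the $-1$ and take $v_\delta(x)=\eta(\delta)|x|^{2-n}$ (which is $\ge 0=w$ on $\partial B_1(0)$, so the comparison holds on both boundary components), or replace $\eta(\delta)$ by $2\eta(\delta)$ for $\delta$ small enough that $\delta^{2-n}\ge 2$. With either fix the maximum-principle step and the limit $\delta\to 0$ go through exactly as you describe and the proof is complete.
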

Bochner \cite{bochner1956} generalized Riemann's theorem to arbitrary sets of singularity. In the most general setting, one may ask:
\begin{problem}
Suppose $M$ is a smooth manifold and $A\subset M$ is a subset of $M$. $L$ is a differential operator with $Lu=0$ on $M\setminus A$. What are the conditions on $u,A,L$ such that there exists an extension $\tilde{u}$ of $u$ on $M$ such that $\tilde{u}=u$ on $M\setminus A$ and that $L\tilde{u}=0$ on $M$?
\end{problem}
In a series of papers, Carleson \cite{carleson1963,carleson1967selected}, Serrin \cite{serrin1964local,Serrin1964RemovableSO}, Littman \cite{littman1967polar} and Harvey and Polking \cite{harveypolking} produced fruitful results under various circumstances, where $u$ belongs to $L^p$ or $C^\alpha$ class of functions and the size of $A$ is measured in terms of Minkowski content, Hausdorff dimension or analytic capacity. Also, much attention has been paid to nonlinear equations, including works by Nirenberg \cite{Nirenberg80removablesingularities}, Baras and Pierre \cite{baras1984singularites}, V\'{e}ron \cite{veron1996singularities} and Brezis and Nirenberg \cite{brezis1997removable}. 

Another approach which enjoys more popularity in geometry and complex analysis deals with positive singular solutions, starting from B\^{o}cher's theorem \cite{bocher1903singular}, in which the set of singularities cannot be removed:
\begin{theorem}[B\^{o}cher, 1903]
Suppose $u\geq0,u\in L^1_\loc(B_1\setminus\{0\})$ satisfies $-\Delta u=0$ in $B_1\setminus\{0\}$, then $u\in L^1_\loc(B_1)$ and that there exists some constant $a\geq0$ such that $-\Delta u=a\delta_0$ in $\mathcal{D}'(B_1)$.
\end{theorem}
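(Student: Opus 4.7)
The plan is to reduce B\^{o}cher's theorem to a classification of distributions supported at a single point and then exploit the sign constraint $u\geq 0$ to isolate the $\delta_0$. Concretely, I would proceed in three stages: (i) prove $u\in L^1_\loc(B_1)$, so that $-\Delta u$ makes sense as a distribution on all of $B_1$; (ii) apply Schwartz's structure theorem to decompose $-\Delta u$ as a finite sum $\sum c_\alpha\partial^\alpha\delta_0$; (iii) use positivity to kill all derivatives of $\delta_0$ and to force the remaining coefficient to be nonnegative.

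For step (i), consider the spherical average $\phi(r):=\frac{1}{\omega_{n-1}r^{n-1}}\int_{\partial B_r}u$. Applying the divergence theorem to $\Delta u=0$ on annuli shows that $r^{n-1}\phi'(r)$ is constant on $(0,1)$, so $\phi$ satisfies the ODE $\phi''+\frac{n-1}{r}\phi'=0$ and hence
\[
\phi(r)=A+Br^{2-n}\ (n\geq 3),\qquad \phi(r)=A+B\log r\ (n=2),
\]
for some constants $A,B\in\mathbb{R}$. Since $r^{n-1}\phi(r)$ is integrable on $(0,1/2)$ in every dimension, Fubini--Tonelli applied to the nonnegative $u$ yields $u\in L^1(B_{1/2})$. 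Now $T:=-\Delta u\in\mathcal{D}'(B_1)$ has $\supp T\subset\{0\}$ because $\Delta u=0$ pointwise on $B_1\setminus\{0\}$, so by Schwartz's theorem $T=\sum_{|\alpha|\leq N}c_\alpha\partial^\alpha\delta_0$. Letting $\Gamma$ denote the fundamental solution of $-\Delta$ and applying Weyl's lemma to the distributional difference, one obtains a harmonic function $h$ on $B_1$ with $u=h+\sum_{|\alpha|\leq N}c_\alpha\partial^\alpha\Gamma$ as an identity of distributions on $B_1$ and as an identity of smooth functions on $B_1\setminus\{0\}$.

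The main obstacle is step (iii). Let $N$ be the largest integer with $c_\alpha\neq 0$ for some $|\alpha|=N$, and assume toward a contradiction that $N\geq 1$. The function $P(x):=\sum_{|\alpha|=N}c_\alpha\partial^\alpha\Gamma(x)$ is harmonic on $\mathbb{R}^n\setminus\{0\}$ and homogeneous of degree $2-n-N$, so it admits the form $P(x)=|x|^{2-n-N}Y(x/|x|)$ with $Y$ a nonzero spherical harmonic of degree $N\geq 1$ on $S^{n-1}$. Since such $Y$ is $L^2$-orthogonal to the constants on $S^{n-1}$, it must change sign; pick $\theta_0\in S^{n-1}$ with $Y(\theta_0)<0$. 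Because $P$ is strictly more singular at $0$ than both the lower-order terms $\sum_{|\alpha|<N}c_\alpha\partial^\alpha\Gamma$ and the bounded harmonic $h$, along the ray $x=r\theta_0$ we obtain $u(r\theta_0)=r^{2-n-N}\bigl(Y(\theta_0)+o(1)\bigr)\to-\infty$ as $r\to 0^+$, contradicting $u\geq 0$. Hence $N=0$ and $T=c_0\delta_0$. Finally, $u=c_0\Gamma+h$ with $h$ bounded near $0$ and $\Gamma(x)\to+\infty$ as $x\to 0$, so positivity forces $c_0\geq 0$; setting $a:=c_0$ completes the argument.
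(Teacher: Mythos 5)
The paper only states B\^{o}cher's theorem as classical background and cites \cite{bocher1903singular} for it; there is no in-paper proof to compare against. Your route --- spherical averages to get $u\in L^1_{\rm loc}(B_1)$, Schwartz's structure theorem for the point-supported distribution $-\Delta u$, and a spherical-harmonic sign argument to eliminate derivatives of $\delta_0$ --- is the standard modern argument (as in Axler--Bourdon--Ramey), and steps (i) and (ii) are sound, modulo the fact that the homogeneity setup silently assumes $n\geq 3$; for $n=2$ the logarithmic fundamental solution requires a minor adjustment.

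Step (iii), however, has a real gap. You take $N$ to be the largest multi-index length with $c_\alpha\neq 0$ and assert that $P(x)=\sum_{|\alpha|=N}c_\alpha\partial^\alpha\Gamma(x)$, viewed as a function on $\mathbb{R}^n\setminus\{0\}$, equals $|x|^{2-n-N}Y(x/|x|)$ with $Y$ a \emph{nonzero} spherical harmonic. This does not follow: the pointwise derivatives $\{\partial^\alpha\Gamma\}_{|\alpha|=N}$ are linearly dependent on $\mathbb{R}^n\setminus\{0\}$ whenever $N\geq 2$ (for example $\sum_i\partial_i^2\Gamma=\Delta\Gamma\equiv 0$ away from the origin), so nonzero top-degree Schwartz coefficients can produce $P\equiv 0$, and then the ray estimate $u(r\theta_0)=r^{2-n-N}(Y(\theta_0)+o(1))$ yields no contradiction. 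The correct quantity to maximize is not the multi-index degree of the coefficients but the degree $m$ of the top nonzero spherical harmonic $Y_m$ in the pointwise decomposition $u(x)-h(x)=\sum_m|x|^{2-n-m}Y_m(x/|x|)$ on $B_1\setminus\{0\}$; the sign argument then eliminates every $Y_m$ with $m\geq 1$. Having done so you still need one more step: from $u=h+c_0\Gamma$ holding a.e.\ as $L^1_{\rm loc}$ functions on $B_1$, deduce $-\Delta u=c_0\delta_0$ in $\mathcal{D}'(B_1)$ and then, by uniqueness of the Schwartz representation, that all $c_\alpha$ with $|\alpha|\geq 1$ vanish --- this is what actually forces $N=0$, which the ray argument alone does not give.
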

Schoen and Yau \cite{Yau1988} studied positive singular harmonic functions on the limit set of the Kleinian group. Caffarelli, Gidas and Spruck \cite{caffarelli1989}, Li \cite{li1996local} analyzed the behavior of singular solutions to a nonlinear elliptic equation. Armstrong, Smart and Sirakov \cite{armstrong2011} investigated fundamental solutions to homogenous fully nonlinear elliptic equations. Li and Nguyen \cite{linguyen2014} generalized B\^{o}cher's theorem and Harnack inequalities to a class of conformally invariant fully nonlinear degenerate elliptic equations, closely related to the Yamabe problem. Li, Wu, Xu and Liu \cite{li2018maximum,li2020non} worked on B\^{o}cher type theorems for fractional operators and established some maximum principles. A comprehensive survey of isolated singularities in PDE theory can be found in Ghergu and Taliaferro \cite{ghergu_taliaferro_2016}. 

Although much has been understood in the behavior of solutions near isolated singularities, generalizations of B\^{o}cher's theorem in higher dimensional singularities remain unknown. In this paper, we focus on establishing B\^{o}cher type theorems for 1-dimensional circular singularities. To this end, a $L^p$ integrability estimate of positive harmonic functions near the singularities plays a crucial role in combining our effort with previous works summarized in Polking \cite{polking1984survey}, eventually entailing results of B\^{o}cher type:
\begin{theorem}\label{removal1}
Suppose $u\geq0,u\in L^1_\loc(B_2\setminus\Gamma)$ satisfies $-\Delta u=0$ in $B_2\setminus\Gamma$, where $B_2$ is the $n$-dimensional ball of radius $2$ and $\Gamma$ is the $1$-dimensional circle
\begin{equation}
    \Gamma=\{x\in\mathbb{R}^n:x_1^2+x_2^2=1,x_3=\cdots=x_n=0\}
\end{equation}embedded in $\mathbb{R}^n$. Then for any $1\leq p<\frac{n}{n-2},n\geq4$,
\begin{enumerate}
    \item $u\in L^p_\loc(B_2)$.
    \item There exists a distribution $v\in W^{-2,p}(\Gamma)$ on $\Gamma$ such that for all test function $\phi\in C_c^\infty(B_2)$,
    \begin{equation}\label{characterization2}
        \langle-\Delta u,\phi\rangle=\langle v,\tilde{\phi}\rangle,
    \end{equation}where $\tilde{\phi}=\phi|_\Gamma$ is the restriction of $\phi$ on $\Gamma$. 
\end{enumerate}
\end{theorem}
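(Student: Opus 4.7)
The plan is to prove the two conclusions in sequence: first establish $u\in L^p_{\loc}(B_2)$ by realizing $u$ as a Newtonian potential of a finite measure on $\Gamma$, and then read off the distributional characterization directly.

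For the $L^p$ integrability, the central task is to show that $u$ extends to an $L^1_{\loc}(B_2)$ function, so that $\mu:=-\Delta u$ is well-defined as a distribution on all of $B_2$. Working in Fermi coordinates near $\Gamma$, on each tubular shell $S_r:=\{x:r/2<\dist(x,\Gamma)<2r\}$, $u$ is a positive harmonic function, so Harnack's inequality yields $\sup_{S_r}u\le C\inf_{S_r}u$ with $C$ independent of $r$. Coupling this shell-Harnack bound with a divergence-theorem argument on $B_{3/2}\setminus N_\epsilon(\Gamma)$ (here $N_\epsilon(\Gamma)$ denotes the $\epsilon$-tube), or equivalently by comparison with the explicit single-layer barrier
\[
W(x):=\int_\Gamma |x-q|^{2-n}\,d\sigma(q),
\]
I would derive the growth bound
\[
F(r):=\int_{\partial N_r(\Gamma)\cap B_{3/2}} u\,dS \le C(1+r^{3-n}).
\]
Integrating $F(r)$ against $r^{n-2}\,dr$ in the normal variable (the relevant integrand $r\,dr$ is summable for $n\ge 4$) gives $u\in L^1_{\loc}(B_2)$. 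With $u\in L^1_{\loc}(B_2)$, $\mu=-\Delta u$ is a well-defined distribution supported on $\Gamma$, and the positivity of $u$ together with harmonicity off $\Gamma$ forces $u$ to be distributionally superharmonic on $B_2$; by the Riesz representation theorem $\mu$ is then a non-negative Radon measure, necessarily finite on $\Gamma\cap B_{3/2}$.

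Once $\mu$ is identified as a finite non-negative measure on $\Gamma$, I would use the Green's function $G(x,q)$ of $-\Delta$ on $B_2$ to write $u=G\mu+h$ on $B_{3/2}$, where $G\mu(x):=\int_\Gamma G(x,q)\,d\mu(q)$ and $h:=u-G\mu$ is a bounded harmonic correction. Since $G(x,q)\sim c_n|x-q|^{2-n}$, Minkowski's integral inequality yields
\[
\|G\mu\|_{L^p(B_{3/2})} \le \mu(\Gamma)\cdot \sup_{q\in\Gamma}\|G(\cdot,q)\|_{L^p(B_{3/2})},
\]
and the supremum on the right is finite precisely when $p(n-2)<n$, i.e.\ $p<n/(n-2)$; this proves (1). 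For (2), since $\mu$ is a measure concentrated on $\Gamma$, for any $\phi\in C_c^\infty(B_2)$
\[
\langle -\Delta u,\phi\rangle = \int_{B_2}\phi\,d\mu = \int_\Gamma \phi|_\Gamma\,d\mu =:\langle v,\tilde\phi\rangle,
\]
so $v:=\mu$, viewed as a distribution on the $1$-manifold $\Gamma$, satisfies (1.2). Its membership in $W^{-2,p}(\Gamma)$ follows from the one-dimensional Sobolev embedding $W^{2,p'}(\Gamma)\hookrightarrow C(\Gamma)$ together with $\mu(\Gamma)<\infty$, via $|\langle v,\psi\rangle|\le \mu(\Gamma)\|\psi\|_{C(\Gamma)}\le C\|\psi\|_{W^{2,p'}(\Gamma)}$.

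The main obstacle will be the growth bound on $F(r)$, i.e.\ the rigorous $L^1_{\loc}$ extension across $\Gamma$. A barrier comparison $u\le CW$ using the maximum principle is conceptually the cleanest route, but it is delicate because $u$ is not known to be continuous up to $\Gamma$; one has to approximate by solving auxiliary Dirichlet problems on $B_{3/2}\setminus N_\epsilon(\Gamma)$ and pass to the limit, tracking the boundary data produced on $\partial N_\epsilon(\Gamma)$. An alternative is to exploit the $S^1$-symmetry of the singular set by Fourier-decomposing $u$ along $\Gamma$ into modes $e^{ik\theta}$ and analyzing each mode via a curvature-perturbed Bessel-type equation in the transverse $(n-1)$-dimensional slice; this trades the barrier comparison for a detailed ODE analysis. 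Once Step 1 is in hand, the remainder is essentially formal, and it is precisely here — in upgrading the distribution $\mu$ to a measure on $\Gamma$, thereby forbidding transverse derivatives of $\delta_\Gamma$ — that the hypothesis $u\ge 0$ plays its decisive role, paralleling the classical B\^ocher theorem for isolated singularities.
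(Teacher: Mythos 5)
Your route is genuinely different from the paper's --- a potential-theoretic argument (Riesz decomposition, Green potential of a measure on $\Gamma$, Minkowski's integral inequality) rather than the paper's direct weighted gradient estimate built around the barrier $\psi_\varepsilon=(1-(\varepsilon/\rho)^\alpha)_+$, the three-term identity for $\gamma(2-\beta)I_\varepsilon$ and $\gamma I_\varepsilon$, and an absorbing inequality. Moreover, if your argument were correct as stated it would prove strictly more than Theorem \ref{removal1}: you conclude that $\mu=-\Delta u$ is a nonnegative Radon measure on $\Gamma$, which the paper explicitly leaves open as a conjecture in its final remark. That discrepancy alone should prompt scrutiny.

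Two steps do not hold up. (i) The claimed shell-Harnack bound $\sup_{S_r}u\le C\inf_{S_r}u$ with $C$ independent of $r$ is false. The shell $S_r=\{r/2<\dist(\cdot,\Gamma)<2r\}$ has circumference of order $1$ but thickness of order $r$, so a Harnack chain joining antipodal slices of the torus uses $\sim 1/r$ balls of radius $\sim r$ and the constant degenerates like $e^{C/r}$. What is available is only the local (differential) Harnack $|\nabla u|/u\le C/\dist(\cdot,\Gamma)$, which is what the paper uses in Lemma \ref{barrierestimate}, and that does not directly give your $F(r)$ growth bound; there is also a bookkeeping slip in how $F(r)$ is integrated over $r$ (you write $F$ as a total flux yet integrate as if it were an average). (ii) The assertion that ``$u\in L^1_{\loc}(B_2)$, $u\ge0$, harmonic off $\Gamma$'' forces $u$ to be distributionally superharmonic on $B_2$ is unjustified, and it is precisely the heart of the matter. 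It \emph{is} true, but via the classical removability theorem for superharmonic functions across relatively closed polar sets, together with the observation that $\dim\Gamma=1<n-2$ makes $\Gamma$ polar for $n\ge4$. If you invoke that theorem you get the superharmonic extension --- and hence $L^1_{\loc}$ --- in one stroke, rendering your entire Step 1 unnecessary and resolving the paper's conjecture; if you do not invoke it, the superharmonicity remains an unproved claim and the argument is circular, since you use the measure structure of $\mu$ to derive the $L^p$ estimate but assert rather than establish that structure. You should surface this dependence explicitly and decide which horn of the dilemma you are on.
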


\section{$L^p$ estimates}

We shall prove that $u$ is in fact $L^p$ for some $p>1$ near the singularities, namely the first claim in Theorem \ref{removal1}.
\begin{proposition}
\label{Lp}
Suppose $u\geq0,u\in L^1_\loc(B_2\setminus\Gamma)$ satisfies $-\Delta u=0$ in $B_2\setminus\Gamma$, where $B_2$ is the $n$-dimensional ball of radius $2$ and $\Gamma$ is the $1$-dimensional circle
\begin{equation}
    \Gamma=\{x\in\mathbb{R}^n:x_1^2+x_2^2=1,x_3=\cdots=x_n=0\}
\end{equation}embedded in $\mathbb{R}^n$. Then for any $1\leq p<\frac{n}{n-2},n\geq4$, we have $u\in L^p_\loc(B_2)$.

\end{proposition}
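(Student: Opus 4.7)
The plan is to realize $u$, up to a locally bounded harmonic part, as the Newtonian potential of a non-negative Radon measure supported on $\Gamma$, and then bound the $L^p$ norm of this potential by Minkowski's integral inequality. The threshold $p < n/(n-2)$ will appear in the last step as the local $L^p$ integrability threshold of the fundamental solution $|x|^{2-n}$.

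The first step is to extend $u$ to a non-negative superharmonic function on all of $B_2$. For $n \geq 4$, the $1$-dimensional circle $\Gamma$ has Hausdorff dimension strictly less than $n-2$, hence vanishing Newtonian capacity, so it is polar for the Laplacian. By the classical removable-singularities theorem for superharmonic functions across polar sets, the non-negative harmonic function $u$ on $B_2 \setminus \Gamma$ extends via $\tilde u(y) := \liminf_{x \to y,\, x \notin \Gamma} u(x)$ to a non-negative superharmonic function on $B_2$, which I continue to denote $u$.

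In the second step, since $u$ is superharmonic on $B_2$, the distribution $\mu := -\Delta u$ is a non-negative Radon measure, and the harmonicity of $u$ off $\Gamma$ forces $\supp \mu \subset \Gamma$. Fix a subdomain $\Omega \Subset B_2$ containing $\Gamma$, say $\Omega = B_{3/2}$. Riesz decomposition on $\Omega$ gives
\begin{equation*}
u(x) = h(x) + \int_\Gamma G_\Omega(x,y)\, d\mu(y), \qquad x \in \Omega,
\end{equation*}
with $h$ harmonic on $\Omega$ and $0 \le G_\Omega(x,y) \le C|x-y|^{2-n}$. For any compact $K \subset \Omega$ and any $1 \le p < \infty$, Minkowski's integral inequality yields
\begin{equation*}
\Big\| \int_\Gamma G_\Omega(\cdot,y)\, d\mu(y) \Big\|_{L^p(K)} \le \int_\Gamma \|G_\Omega(\cdot,y)\|_{L^p(K)}\, d\mu(y) \le C\, \mu(\Gamma)\, \sup_{y \in \Gamma} \big\| |x-y|^{2-n} \big\|_{L^p(K)},
\end{equation*}
and the last supremum is finite precisely when $p(n-2) < n$. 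Combined with the boundedness of $h$ on $K$, this yields $u \in L^p_\loc(B_2)$ for all $1 \le p < n/(n-2)$.

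The main obstacle is the superharmonic extension of step one. Invoking the polar-set removability theorem is the cleanest route but relies on potential-theoretic machinery; a more hands-on alternative is to test the identity $\int u\,(-\Delta(\chi_\epsilon \phi))\, dx = 0$ against a tubular cutoff $\chi_\epsilon$ vanishing in the $\epsilon$-neighborhood $T_\epsilon(\Gamma)$, exploiting the codimension $n-1 \ge 3$ through the bound $|T_{2\epsilon}(\Gamma) \setminus T_\epsilon(\Gamma)| = O(\epsilon^{n-1})$ together with Harnack's inequality on this annular layer; this keeps the cutoff-remainder terms bounded uniformly in $\epsilon$ and, in the limit, produces the Riesz measure $\mu$ directly. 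Once the superharmonic extension is in hand, the Minkowski step is entirely routine, and the condition $n \ge 4$ enters exactly where the polarity of the $1$-dimensional circle demands it.
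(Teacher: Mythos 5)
Your proof is correct, and it takes a genuinely different and more abstract route than the paper's. The paper proves Proposition \ref{Lp} by a direct PDE argument: it works in tubular Fermi coordinates $(\rho,\boldsymbol\theta,\varphi)$ around $\Gamma$, introduces the barrier cutoff $\psi_\varepsilon=\max\{1-(\varepsilon/\rho)^\alpha,0\}$, exploits the differential Harnack bound $|\nabla u|/u\leq C/\dist(\cdot,\Gamma)$ to prove the supersolution property of Lemma \ref{barrierestimate}, and then runs a two-step integration-by-parts scheme to bound $\int\rho^\gamma|\nabla u|^2/u^\beta$ and $\int u^{2-\beta}\rho^{\gamma-2}$ uniformly in $\varepsilon$, finally passing to $L^q$ by Sobolev embedding. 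You instead invoke classical potential theory: since $\dim_H\Gamma=1<n-2$ for $n\geq4$, the circle $\Gamma$ has zero Newtonian capacity and is therefore polar; Brelot's removable-singularity theorem for superharmonic functions bounded below across polar sets yields a non-negative superharmonic extension of $u$ to $B_2$; the Riesz decomposition on $B_{3/2}$ then writes $u=h+G_\Omega\mu$ with $\mu=-\Delta u$ a finite non-negative Radon measure supported on $\Gamma$, and Minkowski's integral inequality converts the local $L^p$ integrability of $|x-y|^{2-n}$ (valid exactly for $p<n/(n-2)$) into $u\in L^p_{\loc}$.

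Both routes are valid, but they trade different things. Your route is considerably shorter, applies verbatim to any compact polar set in place of $\Gamma$, and in fact proves strictly more than Proposition \ref{Lp}: it delivers the Radon measure $\mu\geq0$ supported on $\Gamma$ directly, which is precisely Conjecture 3.3 left open at the end of the paper, and it supersedes the $W^{-2,p}(\Gamma)$ characterization of Theorem \ref{removal1}(2). What the paper's route buys in exchange is that it is entirely elementary and self-contained --- it needs nothing beyond the interior Harnack estimate --- and its barrier-plus-gradient-estimate scaffolding is the kind of machinery that carries over to nonlinear, non-divergence, or fractional operators where Riesz decomposition and the theory of superharmonic extensions across polar sets are not available, which is clearly the broader program the authors have in mind. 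Two small points you should make explicit if you write this up: the existence of a harmonic minorant on $B_{3/2}$ (namely $0$, since $u\geq0$) is what makes the Riesz decomposition applicable, and the finiteness $\mu(\Gamma)<\infty$ follows because a Radon measure is finite on the compact set $\Gamma$.
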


\begin{proof}
\begin{enumerate}
    \item \textbf{Basic setup.}
    
    Denote by $\Gamma_r:=\{x\in\mathbb{R}^n:\dist(x,\Gamma)\leq r\}$ the $r$-neighbourhood of $\Gamma$. We shall utilize the following "canonical" parametrization of $\Gamma_{\frac{1}{2}}$:
    \begin{equation}
        \Phi(\rho,\theta_1,\theta_2,\cdots,\theta_{n-2},\varphi)=(x_1,x_2,\cdots,x_n),
    \end{equation}where
    \begin{equation}
        \begin{cases}
            x_1=(1+\rho\cos\theta_1\cdots\cos\theta_{n-2})\cos\varphi,\\
            x_2=(1+\rho\cos\theta_1\cdots\cos\theta_{n-2})\sin\varphi,\\
            x_3=\rho\cos\theta_1\cdots\cos\theta_{n-3}\sin\theta_{n-2},\\
            x_4=\rho\cos\theta_1\cdots\cos\theta_{n-4}\sin\theta_{n-3},\\
            \cdots,\\
            x_{n-1}=\rho\cos\theta_1\sin\theta_2,\\
            x_n=\rho\sin\theta_1,
        \end{cases}\quad\rho\in\left[0,\frac{1}{2}\right],\theta_1,\cdots,\theta_{n-2},\varphi\in[0,2\pi),
    \end{equation}so that the tubular neighbourhood $\Gamma_r=\{0\leq\rho\leq r\}$. Indeed, this is just the coordinates on the doubly warped product 
    \begin{equation}
        \Gamma_r=\bigsqcup_{q\in\Gamma}B_r^{n-1}(q)=I\times\mathbb{S}^{n-2}\times\Gamma,\quad 0<r<\frac{1}{2},
    \end{equation}where $B_r^{n-1}(q)$ stands for the $n-1$-dimensional ball centered at $q$ with radius $r$ on the normal plane $T_q^\perp\Gamma$, and $I$ being interval $[0,r]$. 
    The metric tensor is then given by
    \begin{equation}
    g=d\rho^2+\rho^2ds_{n-2}^2+f^2(\rho)d\varphi^2.
    \end{equation}Here $ds_{n-2}^2$ denotes the spherical metric on $\mathbb{S}^{n-2}$ and $f(\rho)=1+K(\boldsymbol{\theta})\rho$, where $K(\boldsymbol{\theta})=\cos\theta_1\cdots\cos\theta_{n-2}$ is independent of $\rho$, $|K|\leq1$.

   It follows that $\nabla=(\frac{\partial}{\partial\rho}\cdot)\textbf{e}_\rho+\nabla_{\tau}$ and the Laplacian 
    \begin{equation}\label{deltag}
        \Delta=\frac{1}{\rho^{n-2}(1+K\rho)}\frac{\partial}{\partial\rho}\left(\rho^{n-2}(1+K\rho)\frac{\partial}{\partial\rho}\cdot\right)+\Delta_\tau,
    \end{equation}$\nabla_\tau,\Delta_\tau$ consisting of derivatives independent of the variable $\rho$.
    
    \item \textbf{Cutoff function.}
    
    For fixed $\varepsilon>0$ sufficiently small and $\alpha\in(0,1)$, construct the barrier function 
    \begin{equation}
        \varphi_\varepsilon(x)=1-\left(\frac{\varepsilon}{\rho}\right)^\alpha, \quad x\in B_2\setminus\Gamma
    \end{equation}and let
    \begin{equation} 
        \psi_\varepsilon(x)=\max\{\varphi_\varepsilon(x),0\}=\begin{cases}
           \displaystyle 1-\left(\frac{\varepsilon}{\rho}\right)^\alpha,&x\in B_2\setminus\Gamma_\varepsilon,\\
           0,&x\in\Gamma_\varepsilon,
        \end{cases}
    \end{equation}
    so that $\psi_\varepsilon$ serves as a cutoff near $\Gamma$ with
    \begin{equation}
        \psi_\varepsilon\in C(B_2),\quad 0\leq\psi_\varepsilon\leq1,\quad \supp{\psi_\varepsilon}\subset B_2\setminus\Gamma_\varepsilon.
    \end{equation}
    
    Plus, a standard cutoff function $\eta$ is introduced to eradicate the technicalities near the boundary of $B_2$, namely $\eta\in C_c^\infty(B_2),0\leq\eta\leq1$ with 
    \begin{equation}
        \eta(x)=\begin{cases}
            1,&\rho\leq\frac{1}{2}r_0;\\
            0,&\rho>\frac{3}{4}r_0,
        \end{cases}
    \end{equation}
    where $2\varepsilon<r_0\ll\frac{1}{2}$ is independent of $\varepsilon$.     Note that the parameter $r_0$ in $\eta$ corresponds to that in Lemma
    \ref{barrierestimate}.
    
    \item \textbf{Gradient estimate.}

    Without loss of generality, assume $u\geq1$ in $B_2\setminus\Gamma$. We start from estimating a particular quantity, denoted by \begin{equation}
    I_\varepsilon=\int_{B_2}\frac{\rho^{\gamma-1}\frac{\partial u}{\partial\rho}}{u^{\beta-1}}\eta\psi_\varepsilon,
\end{equation}
where $\gamma\in(0,1)$ and $\beta\in(1,2)$ are yet to be chosen. Also denote
\begin{equation}
    A_\varepsilon=\left(\int_{B_2}\frac{\rho^\gamma|\nabla u|^2}{u^\beta}\eta\psi_\varepsilon\right)^{\frac{1}{2}},\quad B_\varepsilon=\left(\int_{B_2}\frac{u^{2-\beta}}{\rho^{2-\gamma}}\eta\psi_\varepsilon\right)^\frac{1}{2}.
\end{equation}Clearly by H\"{o}lder's inequality, 
\begin{equation}\label{calculation204}
    |I_\varepsilon|\leq A_\varepsilon B_\varepsilon.
\end{equation}The aim is to control $A_\varepsilon^2$ and $B_\varepsilon^2$ by $|I_\varepsilon|$ so that all three integrals are bounded by constants independent of $\varepsilon$. 
\begin{enumerate}
    \item Step 1.
    
    Since all entries are smooth and bounded on $B_2\setminus\Gamma_\varepsilon$, the divergence theorem implies
\begin{equation}\label{calculation103}
\begin{aligned}
    \gamma(2-\beta)I_\varepsilon&=\int_{B_2}\eta\psi_\varepsilon\nabla u^{2-\beta}\cdot\nabla\rho^\gamma
    =-\int_{B_2}u^{2-\beta}\div(\eta\psi_\varepsilon\nabla\rho^\gamma)\\
    &=-\int_{B_2}u^{2-\beta}\eta\psi_\varepsilon\Delta\rho^\gamma-\int_{B_2}u^{2-\beta}\eta\nabla\psi_\varepsilon\cdot\nabla\rho^\gamma-\int_{B_2}u^{2-\beta}\psi_\varepsilon\nabla\eta\cdot\nabla\rho^\gamma\\
    &:=-I_1-I_2-I_3.
\end{aligned}
\end{equation}

Using \eqref{deltag} we readily compute $\Delta\rho^\gamma\geq\gamma(n+\gamma-3-s)\rho^{\gamma-2}>0$, where $s=\frac{r_0}{1-r_0}$ is sufficiently small provided that $r_0\ll\frac{1}{2}$. Hence
\begin{equation}
I_1=\int_{B_2}u^{2-\beta}\eta\psi_\varepsilon\Delta\rho^\gamma\geq\gamma(n+\gamma-3-s)\int_{B_2}\frac{u^{2-\beta}}{\rho^{2-\gamma}}\eta\psi_\varepsilon,    
\end{equation}and since $\nabla\psi_\varepsilon\cdot\nabla\rho^\gamma=\gamma\alpha\varepsilon^\alpha\rho^{-\alpha+\gamma-2}>0$, we deduce $I_2>0$. Furthermore, since $\nabla\eta$ is supported away from $\Gamma$ uniformly in $\varepsilon$, clearly $|I_3|\leq C$ for some constant $C>0$ independent of $\varepsilon$.

Therefore, we may rewrite \eqref{calculation103} as
\begin{equation}
    \gamma(2-\beta)|I_\varepsilon|=|I_1+I_2+I_3|\geq |I_1|+|I_2|-|I_3|\geq\gamma(n+\gamma-3-s)B_\varepsilon^2-C,
\end{equation}namely
\begin{equation}\label{calculation104}
    \frac{c_0}{2-\beta}B_\varepsilon^2\leq|I_\varepsilon|+C,
\end{equation}where $c_0=n+\gamma-3-s>0$.

\item Step 2.

Likewise, since $u$ is harmonic on $B_2\setminus\Gamma_\varepsilon$, we also have
\begin{equation}\label{calculation202}
\begin{aligned}
    \gamma I_\varepsilon&=\int_{B_2}\frac{\nabla u\cdot\nabla\rho^\gamma}{u^{\beta-1}}\eta\psi_\varepsilon=-\int_{B_2}\rho^\gamma\div(\eta\psi_\varepsilon u^{1-\beta}\nabla u)\\
    &=(\beta-1)\int_{B_2}\frac{\rho^\gamma|\nabla u|^2}{u^\beta}\eta\psi_\varepsilon-\int_{B_2}\rho^\gamma\eta u^{1-\beta}\nabla\psi_\varepsilon\cdot\nabla u-\int_{B_2}\rho^\gamma\psi_\varepsilon u^{1-\beta}\nabla\eta\cdot\nabla u\\
    &:=J_1-J_2-J_3
\end{aligned}
\end{equation}
Similarly as before, $|J_3|\leq C$ for some constant $C>0$ independent of $\varepsilon$. It suffices to estimate $J_2$. We need the following lemma:

\begin{lemma}\label{barrierestimate}
There exists $r_0>0$ such that for any fixed $\varepsilon>0$ sufficiently small, 
\begin{equation}\label{calculation201}
    -\Delta\psi_\varepsilon+\frac{\rho^\gamma\nabla u}{u^\beta}\cdot\nabla\psi_\varepsilon\leq0
\end{equation}in $\mathcal{D}'(\Gamma_{r_0}\setminus\Gamma)$. 
\end{lemma}
\begin{proof}
Using \eqref{deltag}, we readily compute that for $n\geq4$,
    \begin{equation}\label{deltabarrier}
        -\Delta\varphi_\varepsilon=\frac{\alpha(\alpha+3-n)\varepsilon^\alpha}{\rho^{\alpha+2}(1+K\rho)}\left(1+\frac{n-\alpha-2}{n-\alpha-3}K\rho\right)
        \sim O\left(-\frac{\varepsilon^\alpha}{\rho^{\alpha+2}}\right)
    \end{equation}in $\Gamma_{r_0}\setminus\Gamma$ for some fixed $2\varepsilon<r_0\ll\frac{1}{2}$ independent of $\varepsilon$. 
    
Since $u$ is harmonic in any $\Gamma_{2r}\setminus\Gamma_{\frac{r}{2}}$, the differential Harnack inequality\cite{han2011elliptic} implies
\begin{equation}
    \frac{|\nabla u|}{u}(x)\leq\frac{C}{\dist(x,\Gamma)},\quad\forall x\in \Gamma_{\frac{1}{2}}\setminus\Gamma,
\end{equation}for some $C=C(n)>0$ independent of $\varepsilon,x$. Hence we have $|\rho^\gamma u^{-\beta}\nabla u|\leq C\rho^{\gamma-1}$. Combining 
    $\frac{\partial\varphi_\varepsilon}{\partial\rho}=\alpha\varepsilon^\alpha\rho^{-\alpha-1}$ and \eqref{deltabarrier} yields
\begin{equation}
    -\Delta\varphi_\varepsilon(x)+\frac{\rho^\gamma\nabla u}{u^\beta}\cdot\nabla\varphi_\varepsilon(x)\leq-\varepsilon^\alpha(C_1\rho^{-\alpha-2}-C_2\rho^{-\alpha-2+\gamma})\leq0
\end{equation}provided that $\rho\leq r_0$ which is sufficiently small while still independent of $\varepsilon$. We conclude by noting the fact that the maximum of two subharmonic functions is also subharmonic; see Lemma 5.1 in \cite{li2020non} for a detailed proof. In fact, \eqref{calculation201} holds pointwise on $\Gamma_{r_0}\setminus\Gamma$ except at $\rho=\varepsilon$, where $\psi_\varepsilon$ happens to be not twice differentiable.
\end{proof}

Now that we have \eqref{calculation201}, 
\begin{equation}
\begin{aligned}
    J_2&=\int_{B_2}\rho^\gamma\eta u^{1-\beta}\nabla\psi_\varepsilon\cdot\nabla u=\int_{B_2}u\eta\left(-\Delta\psi_\varepsilon+\frac{\rho^\gamma\nabla u}{u^\beta}\cdot\nabla\psi_\varepsilon\right)-u\eta(-\Delta)\psi_\varepsilon\\
    &
    \leq\langle u\eta,\Delta\psi_\varepsilon\rangle=\langle\Delta(u\eta),\psi_\varepsilon\rangle=\int_{B_2}\psi_\varepsilon(2\nabla u\cdot\nabla\eta+u\Delta\eta)\leq C,
\end{aligned}  
\end{equation}where we make use of the fact that the derivatives of $\eta$ are supported away from $\Gamma_{r_0}$. Hence substituting into \eqref{calculation202} yields
\begin{equation}
    \gamma I_\varepsilon=(\beta-1)A_\varepsilon^2-J_2-J_3\geq(\beta-1)A_\varepsilon^2-C,
\end{equation}namely
\begin{equation}\label{calculation203}
    \frac{\beta-1}{\gamma}A_\varepsilon^2\leq|I_\varepsilon|+C.
\end{equation}

\item Step 3.

Combining \eqref{calculation204}, \eqref{calculation104} and \eqref{calculation203} yields

\begin{equation}
    \begin{cases}
        \displaystyle\frac{\beta-1}{\gamma}A_\varepsilon^2\leq A_\varepsilon B_\varepsilon+C,\\[2.5ex]
        \displaystyle\frac{c_0}{2-\beta}B_\varepsilon^2\leq A_\varepsilon B_\varepsilon+C.
    \end{cases}
\end{equation}Thus we have
\begin{equation}
    \left(\frac{\beta-1}{\gamma}-\frac{2-\beta}{c_0}\right)A_\varepsilon^2\leq C
\end{equation}for some constant $C>0$ independent of $\varepsilon$, as a consequence of the absorbing inequality. Since $\psi_\varepsilon\nearrow1$ as $\varepsilon\rightarrow0^+$, the monotone convergence theorem implies the estimate
\begin{equation}
    \int_{\Gamma_{\frac{1}{2}r_0}}\frac{\rho^\gamma|\nabla u|^2}{u^\beta}\leq\int_{B_2}\frac{\rho^\gamma|\nabla u|^2}{u^\beta}\eta\leq C<\infty 
\end{equation}holds for any $\beta\in(\frac{c_0+2\gamma}{c_0+\gamma},2)$.

It follows from the generalized H\"{o}lder's inequality and the Sobolev embedding that
\begin{equation}\label{calculation10003}
    C\geq\left(\int\frac{\rho^\gamma|\nabla u|^2}{u^\beta}\right)^t\int\rho^{-\gamma t}\geq\left(\int|\nabla u^{\frac{2-\beta}{2}}|^{\frac{2t}{t+1}}\right)^{t+1}\geq C\|u\|_{L^q}^{t(2-\beta)},
\end{equation}where $t<\frac{n-1}{\gamma}$ and $q=\frac{(2-\beta)nt}{(n-2)t+n}$. Above all, we have shown that $u\in L_\loc^q(B_2)$ for any
\begin{equation}\label{calculation10004}
    q=\frac{(2-\beta)nt}{(n-2)t+n}<\frac{c_0n(n-1)}{(c_0+\gamma)(n^2+(\gamma-3)n+2)}\nearrow\frac{n}{n-2},\quad\gamma\rightarrow0^+.
\end{equation}

\end{enumerate}

\end{enumerate}
\end{proof}

\begin{remark}
Traditional proofs of B\^{o}cher's theorem are based on $L^1$ estimates near isolated singularities, which are easier to establish, and $L^p$ integrability results eventually appear as consequences of the characterization of $u$, by virtue of the fundamental solution; see for instance \cite{li2020non}. The approach we take here is thus relatively new. 
\end{remark}

\begin{remark}
The $L^p,1\leq p<\frac{n}{n-2}$ regularity is sharp in the following sense: 
\begin{example}
Consider $u(x)=\Phi_{x_0}=c|x-x_0|^{2-n}>0$, i.e. the fundamental solution centered at some $x_0\in\Gamma$. Clearly $-\Delta u=\delta_{x_0}$ in $\mathcal{D}'(\mathbb{R}^n)$ so that $-\Delta u=0$ pointwise in $\mathbb{R}^n\setminus\Gamma$ and $u\in L^1_\loc(B_2\setminus\Gamma)$. But obviously $u\notin L_\loc^p(B_2)$ for any $p\geq\frac{n}{n-2}$.
\end{example}
\end{remark}

\section{Generalization of B\^{o}cher's theorem}

With Proposition \ref{Lp} at hand, we are able to characterize $-\Delta u$ around the circular singularities $\Gamma$ and complete the proof of Theorem \ref{removal1}.

\begin{theorem}
Under the hypotheses in Proposition \ref{Lp}, there exists a distribution $v\in W^{-2,p}(\Gamma)$ on $\Gamma$ such that for all test function $\phi\in C_c^\infty(B_2)$,
    \begin{equation}
        \langle-\Delta u,\phi\rangle=\langle v,\tilde{\phi}\rangle,
    \end{equation}where $\tilde{\phi}=\phi|_\Gamma$ is the restriction of $\phi$ on $\Gamma$. 
\end{theorem}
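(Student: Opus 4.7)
The plan is to show that $-\Delta u$, viewed as a distribution on $B_2$, factors through the restriction map $\phi\mapsto\phi|_\Gamma$, and then to read off its regularity on $\Gamma$ from Proposition \ref{Lp}. Since $u\in L^1_\loc(B_2)$ by Proposition \ref{Lp}, $-\Delta u$ is a well-defined distribution on $B_2$; because $u$ is smooth and harmonic off $\Gamma$, this distribution is supported in $\Gamma$.

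The heart of the argument will be the following vanishing claim: for every $\phi\in C_c^\infty(B_2)$ with $\phi|_\Gamma\equiv 0$, $\langle-\Delta u,\phi\rangle=0$. To prove it, I would take a radial cutoff $\chi_\varepsilon$ equal to $1$ on $\Gamma_\varepsilon$, $0$ outside $\Gamma_{2\varepsilon}$, with $|\nabla^k\chi_\varepsilon|\leq C\varepsilon^{-k}$. Because $(1-\chi_\varepsilon)\phi$ is compactly supported in $B_2\setminus\Gamma$, where $u$ is smooth and harmonic, expanding $\Delta[(1-\chi_\varepsilon)\phi]$ by the Leibniz rule and handling the $\int u\chi_\varepsilon(-\Delta\phi)$ piece by dominated convergence gives
\begin{equation*}
    \langle-\Delta u,\phi\rangle=-\lim_{\varepsilon\to 0^+}\int_{B_2}u\phi\,\Delta\chi_\varepsilon-2\lim_{\varepsilon\to 0^+}\int_{B_2}u\,\nabla\chi_\varepsilon\cdot\nabla\phi.
\end{equation*}
The assumption $\phi|_\Gamma\equiv 0$ forces $|\phi|\leq C\rho$ near $\Gamma$; combined with $|\Delta\chi_\varepsilon|\leq C\varepsilon^{-2}$, $|\nabla\chi_\varepsilon|\leq C\varepsilon^{-1}$ and H\"older's inequality on the shell $\Gamma_{2\varepsilon}\setminus\Gamma_\varepsilon$ (whose volume is comparable to $\varepsilon^{n-1}$), both integrands are dominated by
\begin{equation*}
    C\varepsilon^{-1}\|u\|_{L^p(\Gamma_{2\varepsilon})}\,\varepsilon^{(n-1)(1-1/p)}=C\varepsilon^{(n-1)(1-1/p)-1}\|u\|_{L^p(\Gamma_{2\varepsilon})},
\end{equation*}
which tends to zero as soon as $p\geq(n-1)/(n-2)$. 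Proposition \ref{Lp} furnishes such $p$ in the nonempty range $[(n-1)/(n-2),n/(n-2))$ (nonempty since $n\geq 4$), proving the claim.

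With the claim in hand, I would fix a linear extension $E:C^\infty(\Gamma)\to C_c^\infty(B_2)$ by $E\tilde\psi(x):=\tilde\psi(\pi(x))\chi_0(\rho(x))$, where $\pi:\Gamma_{r_0}\to\Gamma$ is the nearest-point projection and $\chi_0$ is a fixed bump vanishing for $\rho\geq r_0/2$. The vanishing claim makes $\langle v,\tilde\psi\rangle:=\langle-\Delta u,E\tilde\psi\rangle$ depend only on $\tilde\psi$. For the regularity,
\begin{equation*}
    |\langle v,\tilde\psi\rangle|=\left|\int_{B_2}u\,\Delta(E\tilde\psi)\right|\leq\|u\|_{L^p(B_2)}\,\|\Delta(E\tilde\psi)\|_{L^{p'}(B_2)}.
\end{equation*}
Since $\nabla\chi_0(\rho)$ is normal to $\Gamma$ while $\nabla(\tilde\psi\circ\pi)$ is tangent to the parallels of $\Gamma$, these gradients are orthogonal in the metric \eqref{deltag}, so
\begin{equation*}
    \Delta(E\tilde\psi)=\chi_0\,\Delta(\tilde\psi\circ\pi)+(\tilde\psi\circ\pi)\,\Delta\chi_0,
\end{equation*}
and a direct computation in tubular coordinates, using $\rho^{n-2}d\rho\,d\theta\,d\varphi$ as the volume element, gives $\|\Delta(E\tilde\psi)\|_{L^{p'}(B_2)}\leq C\|\tilde\psi\|_{W^{2,p'}(\Gamma)}$. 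Density of $C^\infty(\Gamma)$ in $W^{2,p'}(\Gamma)$ then yields $v\in W^{-2,p}(\Gamma)$ for every $p$ in the restricted range, and the embedding $W^{-2,p_1}(\Gamma)\hookrightarrow W^{-2,p_2}(\Gamma)$ for $p_1\geq p_2$ extends the conclusion to all $p\in[1,n/(n-2))$.

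The principal obstacle is the vanishing claim: the cutoff estimate is only summable for $p\geq(n-1)/(n-2)$, so the sharper $L^p$ integrability of Proposition \ref{Lp}, well beyond the classical $L^1_\loc$ bound, is precisely what makes the factorization possible. The remaining bookkeeping in tubular coordinates for the extension $E$ is routine once the orthogonality of $\nabla\chi_0$ and $\nabla(\tilde\psi\circ\pi)$ removes the cross term.
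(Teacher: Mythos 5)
Your proposal is correct and follows essentially the same route as the paper: decompose $\phi$ into a piece vanishing on $\Gamma$ plus an extension of $\phi|_\Gamma$, kill the first piece via a cutoff $\chi_\varepsilon$ shrinking to $\Gamma$ together with the $L^p$ bound from Proposition \ref{Lp} (which makes $\varepsilon^{(n-1)/p'-1}$ vanish precisely because $p$ can be pushed above $(n-1)/(n-2)$), and read off $W^{-2,p}(\Gamma)$ regularity of $v$ from the second-order $\varphi$-derivatives appearing in $\Delta(E\tilde\psi)$. Your version is a touch cleaner in two spots — you track $\|u\|_{L^p(\Gamma_{2\varepsilon})}\to 0$, which handles the endpoint $p=(n-1)/(n-2)$, and you phrase the regularity as a duality estimate against $W^{2,p'}(\Gamma)$ rather than exhibiting the density $w$ directly — but these are refinements of the same argument, not a different one.
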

\begin{proof}
The proof we present here is rather detailed and specific; see Theorem 6.1 in \cite{harveypolking} for more general cases. 

Use polar coordinates on the $x_1Ox_2$ plane to reparametrize $B_2$:
\begin{equation}
    \Phi(r,\theta,x_3,\cdots,x_n)=(r\cos\theta,r\sin\theta,x_3,\cdots,x_n),\quad\theta\in[0,2\pi),r,x_3,\cdots,x_n\in[0,2].
\end{equation}
Note that $\Gamma=\{r=1,x_3=\cdots=x_n=0\}$. For any test function $\phi\in C_c^\infty(B_2)$, denote 
\begin{equation}
    \psi(x)=\psi(r,\theta,x_3,\cdots,x_n)=\phi(r,\theta,x_3,\cdots,x_n)-\phi(1,\theta,0,\cdots,0)\zeta,
\end{equation}where $\zeta=\zeta(r,x_3,\cdots,x_n)\in C_c^\infty(B_2)$ is a smooth cutoff function with $0\leq\zeta\leq1$ and $\zeta\equiv1$ in $\Gamma_{\frac{1}{2}}$. Hence by construction, we deduce 
\begin{equation}
    \psi|_\Gamma=\psi(1,\theta,0,\cdots,0)\equiv0,\quad\psi\in C_c^\infty(B_2),\quad|\psi(x)|\leq C\dist(x,\Gamma),\;\;\forall x\in\Gamma_{\frac{1}{2}}.
\end{equation}

For any $\varepsilon>0$, consider a family of functions $\varphi_\varepsilon\in C_c^\infty(B_2)$ such that
\begin{equation}
   \varphi_\varepsilon(x)=\begin{cases}
    1,&x\in\Gamma_{\frac{1}{2}\varepsilon},\\
    0,&x\in B_2\setminus\Gamma_\varepsilon
    \end{cases}
\end{equation}and that for all $j\in\mathbb{N}$, there exists some constant $C_j>0$ independent of $\varepsilon$ such that 
\begin{equation}
    |\nabla^j\varphi_\varepsilon|\leq C_j\varepsilon^{-j}.
\end{equation}
Thus we have
\begin{equation}
\begin{aligned}
     |\langle-\Delta u,\psi\rangle|&\leq|\langle-\Delta u,(1-\varphi_\varepsilon)\psi\rangle|+|\langle-\Delta u,\varphi_\varepsilon\psi\rangle|=|\langle-\Delta u,\varphi_\varepsilon\psi\rangle|=|\langle u,-\Delta(\varphi_\varepsilon\psi)\rangle|\\
     &\leq\|u\chi_{\Gamma_\varepsilon}\|_{L^1}\|-\Delta(\varphi_\varepsilon\psi)\|_{L^\infty}\leq\|u\|_{L^p(\Gamma_{\frac{1}{2}})}\|\chi_{\Gamma_\varepsilon}\|_{L^{p'}}\|-\Delta(\varphi_\varepsilon\psi)\|_{L^\infty}.
\end{aligned}
\end{equation}Since $\|\chi_{\Gamma_\varepsilon}\|_{L^{p'}}=({\rm{vol}}(\Gamma_\varepsilon))^{\frac{1}{p'}}\leq C\varepsilon^{\frac{n-1}{p'}}$ and 
\begin{equation}
    |-\Delta(\varphi_\varepsilon\psi)|\leq|\psi(-\Delta)\varphi_\varepsilon|+2|\nabla\varphi_\varepsilon\cdot\nabla\psi|+|\varphi_\varepsilon(-\Delta)\psi|\leq C\varepsilon\cdot\varepsilon^{-2}+2C\varepsilon^{-1}+C\leq C\varepsilon^{-1},
\end{equation}we have
\begin{equation}
    |\langle-\Delta u,\psi\rangle|\leq C\varepsilon^{\frac{n-1}{p'}-1}\leq C\varepsilon^{1-\frac{2}{n}}\leq C\varepsilon^{\frac{1}{2}}.
\end{equation}Sending $\varepsilon\rightarrow0$ implies $\langle-\Delta u,\psi\rangle=0$. Therefore, for any test function $\phi\in C_c^\infty(B_2)$,
\begin{equation}
    \langle-\Delta u,\phi\rangle=\langle-\Delta u,\phi(1,\theta,0,\cdots,0)\zeta\rangle=\int_\Gamma w\frac{\partial^2\phi}{\partial\tau^2}dl:=\langle v,\phi|_\Gamma\rangle
\end{equation}for some $w\in L^p(\Gamma)$ and distribution $v=w''\in W^{-2,p}(\Gamma)$.
\end{proof}

\begin{remark}
Unlike the traditional proofs of B\^{o}cher's theorem, the scheme we follow here focuses on an apriori $L^p$ regularity estimate near singularities, subsequently producing B\^{o}cher type results. It is worth noting that the $L^p,p<\frac{n}{n-2}$ regularity alone could entail such results, namely that in \eqref{characterization2} when the action of $-\Delta u$ on some test function $\phi\in C_c^\infty(B_2)$ does not involve derivatives of $\phi$ in the normal direction with respect to the set of singularities $\Gamma$. 

Thus it occurs naturally that the positivity of $u$ here might be of help to annihilate tangential directional derivatives when $-\Delta u$ acts on test functions, or equivalently, improve the regularity of $v\in W^{-2,p}(\Gamma)$ in \eqref{characterization2} to $W^{-1,p}$ generalized functions or even distributions of order 0. Indeed, one is tempted to propose the following conjecture: 
\begin{conjecture}
The distribution $v$ in Theorem \ref{removal1} is positive, namely there exists a positive Radon measure $\mu$ on $\Gamma$ such that for any test function $\phi\in C_c^\infty(B_2),$
\begin{equation}
    \langle-\Delta u,\phi\rangle=\int_\Gamma\tilde{\phi}d\mu,
\end{equation}where $\tilde{\phi}=\phi|_\Gamma$ is the restriction of $\phi$ on $\Gamma$.
\end{conjecture}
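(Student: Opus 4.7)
My plan is to upgrade $u$ to a superharmonic function on all of $B_2$; once this is achieved, the Riesz representation theorem will automatically exhibit $-\Delta u$ as a positive Radon measure on $B_2$, and combining with Theorem~\ref{removal1} will localize this measure on $\Gamma$, yielding the desired $\mu$.

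The key structural observation is that $\Gamma$ is a polar set for the Laplacian whenever $n\geq 4$. Indeed, $\Gamma$ is a smooth $1$-dimensional submanifold of $\mathbb{R}^n$, so its Hausdorff dimension equals $1$, and in particular $H^{n-2}(\Gamma)=0$ for $n\geq 4$, which is the classical sufficient condition for vanishing Newtonian capacity (cf.\ Armitage--Gardiner, \emph{Classical Potential Theory}, Theorem~5.9.3). Since $u\geq 0$ is harmonic--hence superharmonic--on $B_2\setminus\Gamma$ and bounded below, the Brelot-type extension theorem across polar sets applies: the liminf regularization
\begin{equation*}
    \tilde u(x)=\begin{cases}
        u(x), & x\in B_2\setminus\Gamma,\\
        \displaystyle\liminf_{y\to x,\, y\notin\Gamma} u(y), & x\in\Gamma,
    \end{cases}
\end{equation*}
is lower semicontinuous and superharmonic on $B_2$.

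Because $\Gamma$ has $n$-dimensional Lebesgue measure zero, $\tilde u=u$ almost everywhere on $B_2$, so the distributions $[\tilde u]$ and $[u]$ coincide and $-\Delta u=-\Delta\tilde u$ in $\mathcal{D}'(B_2)$. The Riesz decomposition of the superharmonic $\tilde u$ then yields $-\Delta\tilde u=\mu$ for some positive Radon measure $\mu$ on $B_2$, and Theorem~\ref{removal1} forces $\supp\mu\subset\Gamma$ since the action of $-\Delta u$ on any $\phi\in C_c^\infty(B_2\setminus\Gamma)$ vanishes. Consequently $\langle-\Delta u,\phi\rangle=\int_\Gamma\tilde\phi\,d\mu$ with $\mu\geq 0$, exactly as conjectured.

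The most delicate step, I expect, will be reconciling the pointwise, lower semicontinuous extension $\tilde u$ (which may equal $+\infty$ on a capacity-zero subset of $\Gamma$) with the $L^p$/distributional framework developed in the paper: one must check that the Riesz measure coming from potential theory really does represent the same distribution $v$ produced in Theorem~\ref{removal1}, so that the two constructions are compatible. Should this potential-theoretic shortcut prove unwieldy, a direct alternative is to verify $\int_{B_2} u(-\Delta\phi)\geq 0$ for every nonnegative $\phi\in C_c^\infty(B_2)$ by the cutoff technique of Section~2: insert $\psi_\varepsilon$, integrate by parts twice on the harmonic region $B_2\setminus\Gamma_\varepsilon$, and use the differential Harnack bound $|\nabla u|/u\leq C/\rho$ from Lemma~\ref{barrierestimate} to show that the surviving boundary flux on $\partial\Gamma_\varepsilon$ has a nonnegative limit as $\varepsilon\to 0^+$, reflecting the outward-pointing nature of $-\nabla u$ along a function blowing up on $\Gamma$.
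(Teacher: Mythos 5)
There is nothing in the paper to compare your argument with: the statement is posed there as an open conjecture, with no proof given. Your potential-theoretic route is, as far as I can check, essentially sound and would settle it. The chain is: for $n\geq4$ the circle $\Gamma$ has Hausdorff dimension $1<n-2$, hence vanishing Newtonian capacity, i.e. it is a relatively closed polar subset of $B_2$; the function $u$, nonnegative and harmonic on $B_2\setminus\Gamma$, is superharmonic there and locally bounded below on all of $B_2$, so Brelot's removable-singularity theorem for superharmonic functions applies and the liminf regularization $\tilde u$ is superharmonic on $B_2$; then $-\Delta\tilde u\geq0$ in $\mathcal{D}'(B_2)$, a nonnegative distribution is a positive Radon measure, harmonicity of $u$ off $\Gamma$ confines its support to $\Gamma$, local finiteness together with compactness of $\Gamma\subset B_2$ makes it a finite positive measure on $\Gamma$, and since $\Gamma$ is Lebesgue-null we have $\tilde u=u$ a.e., so this measure represents $-\Delta u$, which is exactly the conjectured statement. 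Two small points to tighten: the hypothesis only gives $u\in L^1_\loc(B_2\setminus\Gamma)$ with $-\Delta u=0$ in the distributional sense, so first invoke Weyl's lemma to pass to the smooth harmonic representative before doing pointwise potential theory; and to view $u$ itself as a distribution on all of $B_2$ you need $u\in L^1_\loc(B_2)$, which you may take from Proposition \ref{Lp} or, independently, from the local integrability of the superharmonic extension $\tilde u$ (so your argument even re-derives the $L^1$ statement, though not the sharp $L^p$, $p<\frac{n}{n-2}$, range of the paper).

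The step you flag as delicate---reconciling the Riesz measure with the distribution $v$ of Theorem \ref{removal1}---is in fact a non-issue: the conjecture only asks for a representation of the functional $\phi\mapsto\langle-\Delta u,\phi\rangle$, and since $[\tilde u]=[u]$ as distributions on $B_2$, the Riesz measure of $\tilde u$ \emph{is} $-\Delta u$; compatibility with $v$ (i.e. $v=\mu$, so that $w''$ in the paper's proof is a positive measure) then follows from uniqueness of the distribution, with nothing to check. Accordingly, the fallback flux argument at the end is both unnecessary and the weakest part of the proposal: the sign of the boundary term on $\partial\Gamma_\varepsilon$ is not controlled merely by the Harnack bound $|\nabla u|/u\leq C/\rho$, so I would drop it and write up the first route (polarity of $\Gamma$, Brelot extension, positive distribution equals positive measure, support in $\Gamma$) in full detail.
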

\end{remark}

\bibliographystyle{siam}
\bibliography{ref}
\end{document}